\newtheorem{theorem}{Theorem}[section]
\newtheorem{lemma}[theorem]{Lemma}
\newtheorem{corollary}[theorem]{Corollary}
\theoremstyle{definition}
\theoremstyle{remark}
\newtheorem{remark}[theorem]{Remark}
\begin{document}
\title[On the binary relation $\leq_u$]{On the binary relation $\leq_u$ on self-adjoint Hilbert space operators}
\author[M.S. Moslehian, S.M.S. Nabavi Sales, H. Najafi]{M. S. Moslehian, S. M. S. Nabavi Sales and H. Najafi}
\address{$^1$ Department of Pure Mathematics, Center of Excellence in
Analysis on Algebraic Structures (CEAAS), Ferdowsi University of
Mashhad, P. O. Box 1159, Mashhad 91775, Iran.}
\email{moslehian@ferdowsi.um.ac.ir and moslehian@member.ams.org}
\email{sadegh.nabavi@gmail.com} \email{hamednajafi20@gmail.com}

\keywords{Operator inequality; binary order; operator convex
function; hyponormal; strong operator topology.}

\begin{abstract}
Given self-adjoint operators $A, B\in\mathbb{B}(\mathscr{H})$ it is
said $A\leq_uB$ whenever $A\leq U^*BU$ for some unitary operator
$U$. We show that $A\leq_u B$ if and only if
$f(g(A)^r)\leq_uf(g(B)^r)$ for any increasing operator convex
function $f$, any operator monotone function $g$ and any positive
number $r$. We present some sufficient conditions under which if
$B\leq A\leq U^*BU$, then $B=A=U^*BU$. Finally we prove that if
$A^n\leq U^\ast A^nU$ for all $n\in\mathbb{N}$, then $A=U^\ast AU$.
\end{abstract}
\maketitle

\section{Introduction}

Let $\mathbb{B}(\mathscr{H})$ be the algebra of all bounded linear
operators on a complex Hilbert space $\mathscr{H}$ with the identity
$I$, let $\mathbb{B}_h(\mathscr{H})$ be the real linear space of all
self-adjoint operators and let $\mathcal{U}(\mathscr{H})$ be the set
of all unitary operators in $\mathbb{B}(\mathscr{H})$. By an
orthogonal projection we mean an operator $P \in
\mathbb{B}_h(\mathscr{H})$ such that $P^2=P$. An operator $A \in
\mathbb{B}(\mathscr{H})$ is called positive if $\langle Ax,x\rangle
\ge 0$ for every $x\in \mathscr{H}$ and then we write $A\ge 0$. If
$A$ is a positive invertible operator we write $A>0$. For $A, B \in
\mathbb{B}_h(\mathscr{H})$ we say that $A\le B$ if $B-A \ge 0$. The
celebrated L\"owner--Heinz inequality asserts that the operator
inequality $T\geq S\geq 0$ implies $T^\alpha \geq S^\alpha$ for any
$\alpha\in[0,1]$, see \cite[Theorem 3.2.1]{FUR}. An operator $T$ is
called hyponormal if $T^*T\geq TT^*$.

\noindent Douglas \cite{DOU} investigated the operator inequality
$T^\ast HT\leq H$, with $H$ Hermitian and showed that if $P$ is a
positive compact operator and $A$ is a contraction such that $P \leq
A^*PA$, then $P=A^*PA$; see also \cite{DUG}. Ergodic properties of
the inequality $T^\ast AT\leq A$, with $A$ positive studied by Suciu
\cite{SUC}.

\noindent Given operators $A, B\in \mathbb{B}_h(\mathscr{H})$ it is
said that $A\leq_uB$ whenever $A\leq U^*BU$ for some $U \in
\mathcal{U}(\mathscr{H})$; see \cite{KOS, OKU}. This binary relation
was investigated by Kosaki \cite{KOS} by showing that
\begin{eqnarray}\label{E1}
A\leq_uB \Rightarrow e^A\leq_ue^B.
\end{eqnarray}
Okayasu and Ueta \cite{OKU} gave a sufficient condition for a triple
of operators $(A,B,U)$ with $A, B\in \mathbb{B}_h(\mathscr{H})$ and
$U \in \mathcal{U}(\mathscr{H})$ under which $B\leq A\leq U^*BU$
implies $B=A=U^*BU$. In this note we use their idea and prove a
similar result. In fact we present some sufficient conditions on an
operator $U\in \mathcal{U}(\mathscr{H})$ for which $B\leq A\leq
U^*BU$ ensures $B=A=U^*BU$ when $A, B\in \mathbb{B}_h(\mathscr{H})$.
It is known that $\leq_u$ satisfies the reflexive and transitive
laws but not the antisymmetric law in general; cf. \cite{OKU}. The
antisymmetric law states that
$$A\leq_uB\,\,\,\mbox{and}\,\,\,B\leq_uA\Rightarrow A, B
\,\,\,\mbox{are unitarily equivalent}.$$ We, among other things,
study some cases in which the antisymmetric law holds for the
relation $\leq_u$. We refer the reader to \cite{FUR} for general
information on operators acting on Hilbert spaces. Utilizing a
result of \cite {MON} we show that $A\leq_u B$ if and only if
$f(g(A)^r)\leq_u f(g(B)^r)$ for any increasing operator convex
function $f$, any operator monotone function $g$ and any positive
number $r$. Recall that a real function $f$ defined on an interval
$J$ is said to be operator convex if $f(\lambda
A+(1-\lambda)B)\leq\lambda f(A)+(1-\lambda)f(B)$ for any $A, B\in
\mathbb{B}_h(\mathscr{H})$ with spectra in $J$ and $\lambda\in
[0,1]$ and is called operator monotone if $f(A)\leq f(B)$ whenever
$A\leq B$ for any $A, B\in \mathbb{B}_h(\mathscr{H})$ with spectra
in $J$, see \cite{UCH}. Finally we prove that if $A$ is a positive
operator and $U \in \mathcal{U}(\mathscr{H})$ such that $A^n\leq
U^\ast A^nU$ for all $n\in\mathbb{N}$, then $A=U^\ast AU$.

\section{The results}

First we give the following lemmas that we need in the sequel. The first one is applied frequently without referring to it.

\begin{lemma}\label{U1}
Let $A\in \mathbb{B}_h(\mathscr{H})$ and $U\in
\mathcal{U}(\mathscr{H})$. Then $f(U^*AU)=U^*f(A)U$ for any function
$f$ which is continues on the spectra of $A$.
\end{lemma}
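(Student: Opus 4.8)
The plan is to establish the identity first for polynomials and then extend it to an arbitrary continuous $f$ by a density argument, relying on the fact that the continuous functional calculus is an isometric $*$-homomorphism. At the outset I would record two elementary observations. First, $U^*AU$ is again self-adjoint, and since conjugation by a unitary is a $*$-isomorphism of $\mathbb{B}(\mathscr{H})$, it preserves spectra; hence $\sigma(U^*AU)=\sigma(A)$, so $f$ is continuous on $\sigma(U^*AU)$ as well and both $f(A)$ and $f(U^*AU)$ are well defined. Second, because $U$ is unitary, conjugation $T\mapsto U^*TU$ is an isometry of $\mathbb{B}(\mathscr{H})$.

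For the polynomial case I would compute directly. Using $UU^*=I$, a telescoping product gives $(U^*AU)^n=U^*A(UU^*)A\cdots AU=U^*A^nU$ for every $n\in\mathbb{N}$, and since $U^*IU=I$, linearity yields $p(U^*AU)=U^*p(A)U$ for every polynomial $p$.

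The main step is the passage from polynomials to continuous functions. Since $\sigma(A)$ is compact and $f$ is continuous on it, the Weierstrass approximation theorem furnishes a sequence of polynomials $p_k$ with $\sup_{t\in\sigma(A)}|p_k(t)-f(t)|\to 0$. Because the map $g\mapsto g(A)$ is an isometric $*$-homomorphism from $C(\sigma(A))$ into $\mathbb{B}(\mathscr{H})$, we obtain $\|p_k(A)-f(A)\|=\sup_{t\in\sigma(A)}|p_k(t)-f(t)|\to 0$, and likewise $\|p_k(U^*AU)-f(U^*AU)\|\to 0$ since $\sigma(U^*AU)=\sigma(A)$. Conjugation by $U$ preserves the norm, so $U^*p_k(A)U\to U^*f(A)U$ in norm. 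Combining this with the polynomial identity $p_k(U^*AU)=U^*p_k(A)U$ and the uniqueness of norm limits gives $f(U^*AU)=U^*f(A)U$, as required.

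The only point requiring genuine care is that both approximating sequences converge to the intended limits; this hinges entirely on the spectral identity $\sigma(U^*AU)=\sigma(A)$, which guarantees that the same polynomials $p_k$ approximate $f$ uniformly on the spectrum of $U^*AU$. Once that identity is in hand, the remaining estimates are routine consequences of the isometry of the functional calculus and of unitary conjugation.
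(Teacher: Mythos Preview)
Your proof is correct and follows precisely the approach in the paper: establish $(U^*AU)^n=U^*A^nU$ first, then pass to continuous $f$ via uniform polynomial approximation on $\sigma(A)$ and the continuity of the functional calculus. Your write-up is in fact more careful than the paper's, since you explicitly justify $\sigma(U^*AU)=\sigma(A)$ to ensure the same approximating sequence works for $f(U^*AU)$.
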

\begin{proof}
First we note that $(U^*AU)^n=U^*A^nU$ for all $n$. Using the functional calculus and a sequence of polynomials uniformly converging to $f$ on ${\rm sp}(A)$, we conclude that $f(U^*AU)=U^*f(A)U$.
\end{proof}

\begin{lemma}\cite[Theorems 2.1, 2.3]{MNS}\label{UV1}
Let $T\in\mathbb{B}(\mathscr{H})$ be hyponormal and $T=U|T|$ be the
polar decomposition of $T$ such that $U^{n_0}=I$ for some positive
integer $n_0$, $U^{*n}\to I$ as $n \to \infty$ or $U^n\to I$ as
$n \to \infty$, where the limits are taken in the strong operator
topology. Then $T$ is normal.
\end{lemma}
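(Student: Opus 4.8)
The plan is to recast the hyponormality of $T=U|T|$ as a one-step monotonicity condition on the $U$-conjugates of $|T|^{2}$, and then to use each of the three assumptions on $U$ to force the resulting monotone sequence to be constant; constancy immediately gives that $U$ commutes with $|T|$, hence that $T$ is normal.

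Set $P:=T^{*}T=|T|^{2}$. Since $U$ is unitary, $T^{*}T=|T|\,U^{*}U\,|T|=|T|^{2}=P$ while $TT^{*}=U|T|^{2}U^{*}=UPU^{*}$, so the inequality $T^{*}T\ge TT^{*}$ defining hyponormality is exactly $P\ge UPU^{*}$, equivalently $U^{*}PU\ge P$. Conjugating this by $U^{k}$ shows that $P_{k}:=U^{*k}PU^{k}$ is nondecreasing and $Q_{k}:=U^{k}PU^{*k}$ is nonincreasing; both are norm-bounded, and being monotone sequences of self-adjoint operators they converge in the strong operator topology. Now I run the three cases. If $U^{n_{0}}=I$, then $P_{n_{0}}=U^{*n_{0}}PU^{n_{0}}=P=P_{0}$, so the chain $P_{0}\le P_{1}\le\cdots\le P_{n_{0}}$ is constant and in particular $U^{*}PU=P_{1}=P$. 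If $U^{n}\to I$ strongly, then also $U^{*n}\to I$ strongly, because for a unitary $U$ one has $\|U^{*n}x-x\|^{2}=2\|x\|^{2}-2\operatorname{Re}\langle x,U^{n}x\rangle\to 0$; hence $Q_{n}=U^{n}PU^{*n}\to P$ strongly (a product of uniformly bounded, strongly convergent sequences), so $\lim_{k}Q_{k}=P$, and since $(Q_{k})$ is nonincreasing this forces $Q_{0}=Q_{1}=\cdots=P$, that is $UPU^{*}=P$. The case $U^{*n}\to I$ strongly reduces to the previous one, since it implies $U^{n}\to I$ strongly by the same identity.

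In every case $U$ commutes with $P=|T|^{2}$, hence with $|T|=P^{1/2}$ by Lemma \ref{U1} applied to the square-root function on ${\rm sp}(P)$; therefore $TT^{*}=U|T|^{2}U^{*}=|T|^{2}=T^{*}T$, so $T$ is normal. I do not expect a deep obstacle here: the one genuine idea is the translation of hyponormality of $T=U|T|$ into $P\le U^{*}PU$, after which everything rests on the elementary fact that a monotone sequence which repeats a value (or returns in the limit to its first term) is constant. The two places that need care are the strong-operator monotone convergence theorem for uniformly bounded increasing or decreasing sequences of self-adjoint operators, and the passage $U^{n}\to I\Rightarrow U^{*n}\to I$ in the strong operator topology, which genuinely uses that $U$ is unitary rather than merely a contraction.
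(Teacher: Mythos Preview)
The paper does not supply its own proof of this lemma: it is quoted from \cite[Theorems~2.1, 2.3]{MNS}, so there is no in-paper argument to compare yours against. Your argument is correct and self-contained. The only point worth flagging is the opening clause ``Since $U$ is unitary'': in the polar decomposition $T=U|T|$ the operator $U$ is a priori only a partial isometry with $\ker U=\ker|T|$, and a short argument is needed to see that each of the three hypotheses forces $U\in\mathcal{U}(\mathscr{H})$ (for instance, $U^{n_0}=I$ makes $U$ invertible; $U^{n}\to I$ strongly forces $\ker U=\{0\}$, so $U$ is an isometry, and the same convergence shows $\operatorname{ran}(U)$ is dense, hence all of $\mathscr{H}$; the case $U^{*n}\to I$ is symmetric). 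Once that is in place, your monotone-chain argument---translate hyponormality into $P\le U^{*}PU$, observe that the unitary conjugates $U^{*k}PU^{k}$ and $U^{k}PU^{*k}$ are monotone and norm-bounded, and use periodicity or strong convergence of $U^{n}$ to collapse the chain---goes through exactly as written, and the passage from $UP=PU$ to $U|T|=|T|U$ via Lemma~\ref{U1} is legitimate.
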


\begin{remark} We note that if $U\in \mathcal{U}(\mathscr{H})$, then
$$\|U^n\xi-\xi\|=\|U^{*n}\xi-\xi\|\:\:\:\:(\xi\in\mathcal{H}\:\:\mbox{and}\:\:n\in\mathbb{N}).$$
Thus $U^{*n}\to I$ as $n\to\infty$ if and only if $U^n\to I$ as
$n\to\infty$, where all limits are taken in the strong operator
topology.\end{remark}

\begin{lemma}\label{UV2}
Let $U, V \in \mathcal{U}(\mathscr{H})$ be two commuting operators
such that $U^n\to I$ and $V^n\to I$ as $n \to \infty$. Then
$(UV)^n\to I$ as $n\to\infty$, where all limits are taken in the
strong operator topology.
\end{lemma}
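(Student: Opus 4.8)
The plan is to use commutativity to turn $(UV)^n$ into the genuine power product $U^nV^n$, and then to control this product in the strong operator topology by exploiting that each $U^n$ is an isometry. The whole argument is essentially a two-line estimate once this decomposition is in place.

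First I would observe that since $UV=VU$, an immediate induction gives $(UV)^n=U^nV^n$ for every $n\in\mathbb{N}$. Now fix $\xi\in\mathscr{H}$. Inserting the term $U^n\xi$ and using the triangle inequality, together with the fact that $U^n$ is unitary and hence preserves norms, yields
\[
\|(UV)^n\xi-\xi\|=\|U^nV^n\xi-\xi\|\le\|U^n V^n\xi-U^n\xi\|+\|U^n\xi-\xi\|=\|V^n\xi-\xi\|+\|U^n\xi-\xi\|.
\]
By hypothesis $U^n\to I$ and $V^n\to I$ strongly, so both summands on the right tend to $0$ as $n\to\infty$; hence $\|(UV)^n\xi-\xi\|\to 0$. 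Since $\xi$ was arbitrary, $(UV)^n\to I$ in the strong operator topology, which is the claim.

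The only conceptual point worth flagging is that operator multiplication is not jointly continuous for the strong operator topology, so one cannot merely write $U^nV^n\to I\cdot I$; the step that rescues the argument is that the factors $U^n$ are unitaries, so $\|U^n(V^n\xi-\xi)\|=\|V^n\xi-\xi\|$, which converts the ``cross term'' into something already known to vanish. I do not anticipate any real obstacle beyond recording this observation; commutativity is used only to get $(UV)^n=U^nV^n$, and nothing deeper about $U$ and $V$ is needed.
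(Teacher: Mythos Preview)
Your argument is correct and is essentially identical to the paper's own proof: insert $U^n\xi$, use the triangle inequality, and use that $U^n$ is an isometry to obtain $\|(UV)^n\xi-\xi\|\le\|V^n\xi-\xi\|+\|U^n\xi-\xi\|$. The only difference is that you spell out explicitly the roles of commutativity and unitarity, which the paper leaves implicit.
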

\begin{proof}
It immediately follows from the following
$$\|(UV)^n\xi-\xi\|\leq\|(UV)^n\xi-U^n\xi\|+\|U^n\xi-\xi\|\leq\|V^n\xi-\xi\|+\|U^n\xi-\xi\|\:\:\:\:(\xi\in\mathscr{H}).$$
\end{proof}
\begin{theorem}\label{UV3}
Let $U\in \mathcal{U}(\mathscr{H})$ such that any one of the
following conditions holds:

({\rm i}) $U^{n_0}=I$ for some positive integer $n_0$,

({\rm ii}) $U^n\to I$ as $n\to\infty$ in which the limit is taken in
the strong operator topology.\\
Then $B\leq A\leq U^*BU$ implies that $B=A=U^*BU$ for any $A, B\in
\mathbb{B}_h(\mathscr{H})$.
\end{theorem}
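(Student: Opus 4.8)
The plan is to reduce the whole statement to the single claim that the chain $B\le A\le U^*BU$ forces $A$ to commute with $U$; once that is in hand the conclusion is immediate. To begin, recall that conjugation by a unitary, $X\mapsto U^*XU$, preserves the order on $\mathbb{B}_h(\mathscr{H})$. Hence $B\le A$ yields $U^*BU\le U^*AU$, and chaining this with the hypothesis $A\le U^*BU$ gives
$$A\le U^*AU.$$
This inequality is the heart of the matter, and conditions (i) and (ii) are precisely what is needed to turn it into an equality.

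Under (i), I would iterate: applying $X\mapsto U^*XU$ repeatedly to $A\le U^*AU$ produces the nondecreasing chain $A\le U^*AU\le U^{*2}AU^{2}\le\cdots\le U^{*n_0}AU^{n_0}=A$, so every inequality collapses and in particular $A=U^*AU$. Under (ii), I would instead look at the sequence $A_n:=U^{*n}AU^{n}$, which is nondecreasing because conjugating $A\le U^*AU$ by $U^{n}$ gives $A_n\le A_{n+1}$. Fix $\xi\in\mathscr{H}$ and set $\xi_n:=U^{n}\xi$; since $U^{n}\to I$ strongly we have $\xi_n\to\xi$ in norm, so by continuity of $A$ and of the inner product $\langle A_n\xi,\xi\rangle=\langle A\xi_n,\xi_n\rangle\to\langle A\xi,\xi\rangle$. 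Thus the nondecreasing real sequence $(\langle A_n\xi,\xi\rangle)_n$ converges to its first term $\langle A_0\xi,\xi\rangle=\langle A\xi,\xi\rangle$ and is therefore constant. Since $\xi$ was arbitrary, polarization forces $A_n=A$ for every $n$, and again $A=U^*AU$.

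With $A=U^*AU$, i.e. $UA=AU$, in hand, I would close the argument by returning to the original chain: conjugating $B\le A$ by $U$ gives $U^*BU\le U^*AU=A$, while the hypothesis gives $A\le U^*BU$, so $U^*BU=A$; therefore $B=UAU^*=AUU^*=A$ and hence $B=A=U^*BU$, as required.

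I do not expect a genuine obstacle; the step deserving the most care is case (ii), where one must note that $U^{n}\to I$ strongly is equivalent to $U^{*n}\to I$ strongly (so that $A_n\xi$ really does return to $A\xi$) and that it is the monotonicity of $(A_n)$, not mere convergence, that pins the sequence down. Everything else is routine manipulation with the order-preserving property of unitary conjugation and Lemma \ref{U1}.
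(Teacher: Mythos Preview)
Your argument is correct and is genuinely different from the paper's. The paper shifts $B$ to make it strictly positive, forms $T=(B+\lambda)^{1/2}U$, observes that the chain $B\le A\le U^*BU$ makes $T$ hyponormal, identifies the unitary in the polar decomposition of $T$ as $U$ itself, and then invokes the external result recorded as Lemma~\ref{UV1} (from \cite{MNS}) to conclude that $T$ is normal, hence $TT^*=T^*T$, which collapses the chain. Your route bypasses all of this machinery: you first extract the single inequality $A\le U^*AU$, and then dispose of it by a direct monotonicity-plus-recurrence argument (telescoping in case~(i), a monotone sequence of quadratic forms returning to its initial value in case~(ii)). This is more elementary and entirely self-contained, requiring nothing beyond the order-preservation of unitary conjugation and the fact that a nondecreasing real sequence converging to its first term is constant. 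The paper's approach, by contrast, ties the result to the theory of hyponormal operators and their polar decompositions, which connects it to the authors' earlier work but at the cost of importing a nontrivial lemma. One minor remark: in case~(ii) you do not actually need polarization, since equality of the quadratic forms $\langle A_n\xi,\xi\rangle=\langle A\xi,\xi\rangle$ for all $\xi$ already forces $A_n=A$ when both operators are self-adjoint; and your closing comment about $U^{*n}\to I$ is superfluous, as your computation only ever uses $U^n\xi\to\xi$, which is precisely hypothesis~(ii).
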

\begin{proof}
Let $A, B\in \mathbb{B}_h(\mathscr{H})$ such that $B\leq A\leq
U^*BU$. There exist $\lambda>0$ such that $B+\lambda>0$. Put
$T=(B+\lambda)^{1\over2}U$. By our assumption we have

\begin{eqnarray}\label{1}
TT^*=B+\lambda\leq A+\lambda\leq U^*(B+\lambda)U= T^*T.
\end{eqnarray}
Thus $T$ is a hyponormal operator. Obviously
$|T|=U^*(B+\lambda)^{1\over2}U=U^*T$. Let $T=V|T|$ be the
polar decomposition of $T$. Hence $T=VU^*T$. It follows from the
invertibility of $T$ that $I=VU^*$, that is, $U=V$. Thus $T$
satisfies the conditions of Lemma \ref{UV1}. Therefore $T$ turns out to
be normal. Then (\ref{1}) yields that $B=A=U^*BU$.
\end{proof}
\begin{corollary}
Let $U, V \in \mathcal{U}(\mathscr{H})$ be two commuting operators
satisfying any one of the following conditions

({\rm i}) $U^{n_0}=I$ and $V^{n_0}=I$ for some positive integer
$n_0$,

({\rm ii}) $U^n\to I$ and $V^n\to I$ as $n\to\infty$,

where all limits are taken in the strong operator topology. If $A,
B\in \mathbb{B}_h(\mathscr{H})$ such that $A\leq U^*BU$ and $B\leq
V^*AV$, then $A=U^*BU$ and $B=V^*AV$.
\end{corollary}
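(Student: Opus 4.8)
The plan is to collapse the two hypotheses into a single inequality of the form $B \le W^* B W$ for a suitable unitary $W$, and then to quote Theorem \ref{UV3}. Conjugating $A \le U^* B U$ by $V$ (an operation that preserves the operator order) gives $V^* A V \le V^* U^* B U V$, and combining this with the second hypothesis $B \le V^* A V$ yields
\[
B \le V^* A V \le V^* U^* B U V = (UV)^* B (UV).
\]
Thus, setting $W := UV$, we have $B \le W^* B W$.

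The next step is to verify that $W$ satisfies one of the two hypotheses of Theorem \ref{UV3}. If condition (i) holds, then since $U$ and $V$ commute we get $(UV)^{n_0} = U^{n_0} V^{n_0} = I$; if condition (ii) holds, then Lemma \ref{UV2}, applied to the commuting pair $U$, $V$, gives $(UV)^n \to I$ in the strong operator topology. Either way $W = UV$ is admissible for Theorem \ref{UV3}.

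Now I would apply Theorem \ref{UV3} with $W$ in the role of the unitary and $B$ itself in the role of the operator $A$ there: the chain $B \le B \le W^* B W$ holds (the first inequality trivially, the second by the previous step), so the theorem forces $B = W^* B W = V^* U^* B U V$. Plugging this equality back into the displayed chain gives $B \le V^* A V \le V^* U^* B U V = B$, whence $B = V^* A V$ and $V^* A V = V^* U^* B U V$; conjugating the latter equality by $V$ yields $A = U^* B U$. This establishes both conclusions.

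I do not expect a genuine obstacle: once one spots the substitution producing $B \le (UV)^* B (UV)$, everything reduces to Theorem \ref{UV3}. The only point requiring care — and the reason both the commutativity hypothesis and Lemma \ref{UV2} are needed — is that the composite unitary $UV$ must itself satisfy the hypotheses of that theorem, so that the two-sided squeeze can be closed.
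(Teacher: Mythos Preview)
Your proof is correct and follows the same approach as the paper: reduce to Theorem~\ref{UV3} via the composite unitary $W=UV$, using commutativity (and Lemma~\ref{UV2} in case~(ii)) to check that $W$ meets its hypotheses. The only cosmetic difference is that you invoke Theorem~\ref{UV3} on the degenerate chain $B\le B\le W^*BW$ and then squeeze, whereas one can apply it directly to $B\le V^*AV\le W^*BW$ (with $V^*AV$ in the role of~$A$) to obtain $B=V^*AV=W^*BW$ in one stroke.
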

\begin{proof}
By Lemma \ref{UV2}, the unitary operator $UV$ satisfies the
conditions of Theorem \ref{UV3}.
\end{proof}
The following lemmas are used in the proof of Theorem \ref{UV6}.
\begin{lemma}\cite[Theorem 3.2.3.1]{FUR}\label{UV4}
If $0< A\leq B$, then $\log(A)\leq\log(B)$.
\end{lemma}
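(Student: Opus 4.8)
The plan is to derive this from the classical integral representation of the logarithm together with the operator antitonicity of inversion. First I would recall the elementary scalar identity
$$\log t=\int_0^\infty\left(\frac{1}{1+s}-\frac{1}{t+s}\right)ds\qquad(t>0),$$
verified by noting that $\log\frac{1+s}{t+s}$ is an antiderivative of the integrand and evaluating at the endpoints. Since the spectra of $A$ and $B$ are compact subsets of $(0,\infty)$ and the scalar integrand converges uniformly there, the continuous functional calculus lifts this to the norm-convergent operator integral
$$\log A=\int_0^\infty\left(\frac{1}{1+s}I-(A+sI)^{-1}\right)ds,$$
and likewise for $B$.

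Next I would establish that inversion is operator antitone on positive invertible operators: if $0<C\le D$ then $D^{-1}\le C^{-1}$. Indeed $0<C\le D$ gives $0<D^{-1/2}CD^{-1/2}\le I$, hence $\left(D^{-1/2}CD^{-1/2}\right)^{-1}\ge I$ by the spectral theorem, i.e.\ $D^{1/2}C^{-1}D^{1/2}\ge I$, and conjugating by $D^{-1/2}$ yields $C^{-1}\ge D^{-1}$. Applying this with $C=A+sI$ and $D=B+sI$, which satisfy $0<A+sI\le B+sI$ for every $s>0$ by hypothesis, gives the pointwise inequality
$$\frac{1}{1+s}I-(A+sI)^{-1}\le\frac{1}{1+s}I-(B+sI)^{-1}\qquad(s>0).$$

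Finally I would integrate this inequality over $s\in(0,\infty)$. Because for each $x\in\mathscr{H}$ the map $X\mapsto\langle Xx,x\rangle$ is a positive linear functional and commutes with the (norm-convergent) integral, integrating preserves the order relation on $\mathbb{B}_h(\mathscr{H})$; comparing with the two integral representations above yields $\log A\le\log B$. The only points that need care are the convergence and interchange properties of the operator-valued integral with the order relation, but these are routine once the functional calculus is set up; the one genuinely substantive ingredient is the antitonicity of inversion, which is itself elementary, so I do not expect a serious obstacle here.
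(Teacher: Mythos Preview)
Your argument is correct: the integral representation of $\log$ combined with the antitonicity of $t\mapsto t^{-1}$ on positive invertible operators is the standard route to this result, and each step you outline goes through without difficulty.

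As for comparison, the paper does not supply its own proof of this lemma at all; it is simply quoted from Furuta's book \cite[Theorem 3.2.3.1]{FUR} as a known fact. So there is nothing to compare against, and your self-contained derivation is a welcome addition rather than a divergence from the authors' method.
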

The following result is a variant of Theorem 2.6 of \cite{MON}.
\begin{lemma}\cite[Theorem 2.6]{MON}\label{UV5}
Let $A, B \in \mathbb{B}(\mathscr{H})$ be two positive
operators.Then $B^2 \leq A^2$ if and only if for each operator
convex function $f$ on $[0,\infty)$ with $f^{'}_{+}(0) \geq 0$ it
holds that $f(B)\leq f(A)$.
\end{lemma}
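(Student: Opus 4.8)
The plan is to treat the two implications separately, the easy one first. The implication from the family of inequalities to $B^2\le A^2$ is immediate: $f(t)=t^2$ is operator convex on $[0,\infty)$ with $f'_+(0)=0\ge 0$, so specializing the hypothesis to this particular $f$ already gives $B^2\le A^2$. The real content is the converse, and the idea there is to use the integral representation of operator convex functions on $[0,\infty)$ to split $f$ into pieces, each of which respects the order relation $B^2\le A^2$.

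More precisely, for the converse I would first write, for an operator convex $f$ on $[0,\infty)$ with $f'_+(0)\ge 0$,
\[
f(t)=f(0)+c_1\,t+c_2\,t^2+\int_{(0,\infty)}\frac{t^2}{t+\lambda}\,d\mu(\lambda),
\]
where $c_2\ge 0$, $\mu$ is a positive measure on $(0,\infty)$, and $c_1=f'_+(0)$. A tidy way to produce this and to identify $c_1=f'_+(0)$ without differentiating under the integral is to note that $h(t):=(f(t)-f(0))/t$ is operator monotone on $(0,\infty)$, extends continuously to $0$ with $h(0)=f'_+(0)$, and then to substitute the integral representation of the operator monotone function $h$ into $f(t)=f(0)+t\,h(t)$. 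The hypothesis $f'_+(0)\ge 0$ then says precisely that $c_1\ge 0$. Applying the functional calculus to $A$ and $B$ and comparing the four summands: the constant terms agree; $c_1B\le c_1A$ because $c_1\ge 0$ and, by the L\"owner--Heinz inequality with exponent $1/2$, $B=(B^2)^{1/2}\le(A^2)^{1/2}=A$; $c_2B^2\le c_2A^2$ because $c_2\ge 0$ and $B^2\le A^2$; and for the integral term it suffices to prove, for each fixed $\lambda>0$, the operator inequality
\[
B^2(B+\lambda)^{-1}\le A^2(A+\lambda)^{-1},
\]
and then integrate it against $\mu$. Adding up the four comparisons gives $f(B)\le f(A)$.

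The step I expect to be the real obstacle is this last inequality: the scalar function $t\mapsto t^2/(t+\lambda)$ is operator convex but, being strictly convex, is not operator monotone, so $B\le A$ by itself is not enough and one must genuinely exploit $B^2\le A^2$. My device for this is the auxiliary function $\psi_\lambda(s):=s/(\sqrt s+\lambda)$ on $[0,\infty)$. For $B\ge 0$ one has $\sqrt{B^2}=B$, hence $\psi_\lambda(B^2)=B^2(B+\lambda)^{-1}$, and likewise for $A$; so if $\psi_\lambda$ is operator monotone on $[0,\infty)$, then $B^2\le A^2$ gives at once $\psi_\lambda(B^2)\le\psi_\lambda(A^2)$, which is exactly the required inequality. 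To prove $\psi_\lambda$ operator monotone I would observe that on $(0,\infty)$
\[
\frac{1}{\psi_\lambda(s)}=\frac{\sqrt s+\lambda}{s}=s^{-1/2}+\lambda\,s^{-1},
\]
a sum, with nonnegative coefficients, of the strictly positive and operator monotone \emph{decreasing} functions $s\mapsto s^{-1}$ and $s\mapsto s^{-1/2}=(s^{-1})^{1/2}$ (the second again via L\"owner--Heinz). Hence $1/\psi_\lambda$ is strictly positive and operator monotone decreasing, so its reciprocal $\psi_\lambda$ is operator monotone increasing on $(0,\infty)$ --- using only that $0<X\le Y$ implies $Y^{-1}\le X^{-1}$ --- and, being continuous at $0$, operator monotone on all of $[0,\infty)$. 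This closes the argument; the only remaining points are routine, namely the precise integrability condition on $\mu$ in the integral representation and the (weak) manipulation of the operator-valued integral, neither of which causes any difficulty.
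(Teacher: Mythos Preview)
The paper does not actually prove this lemma; it is quoted as \cite[Theorem~2.6]{MON} and used as a black box. The paragraph that follows the lemma in the paper is not a proof of it but only an explanation (via the integral representation) of why the condition $f'_+(0)\ge 0$ is equivalent to $f$ being increasing on $[0,\infty)$ for operator convex $f$.

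Your argument is correct and complete. The easy direction is immediate as you say, and for the hard direction your decomposition via
\[
f(t)=f(0)+c_1 t+c_2 t^2+\int_{(0,\infty)}\frac{t^2}{t+\lambda}\,d\mu(\lambda)
\]
is exactly the representation the paper itself records (up to the harmless renormalization $\lambda\,d\mu(\lambda)\leftrightarrow d\mu(\lambda)$). The only nontrivial term is the integral kernel, and your device---writing $t^2/(t+\lambda)=\psi_\lambda(t^2)$ with $\psi_\lambda(s)=s/(\sqrt s+\lambda)$ and then observing that
\[
\frac{1}{\psi_\lambda(s)}=s^{-1/2}+\lambda\,s^{-1}
\]
is a positive combination of operator monotone decreasing functions, hence $\psi_\lambda$ is operator monotone on $[0,\infty)$---is a clean and correct way to deduce $B^2(B+\lambda)^{-1}\le A^2(A+\lambda)^{-1}$ directly from $B^2\le A^2$. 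Since the paper supplies no proof of its own, there is nothing further to compare; your proof stands on its own and uses precisely the structural ingredients (the integral representation and L\"owner--Heinz) that the surrounding text makes available.
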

If $f$ is an increasing operator convex function, then $f^{'}_{+}(0)
\geq 0$. The converse is also true. In fact $f$ can be represented
as
$$f(t)= f(0) + \beta t +\gamma t^2 +\int_{0}^{\infty}\frac{\lambda t^2}{\lambda+t} d\mu(\lambda)\,,$$
where $\gamma\geq 0$, $\beta = f^{'}_{+}(0)$ and $\mu$ is a positive
measure on $[0, \infty)$; see \cite [Chapter V]{BH}. Hence if
$f^{'}_{+}(0)$, then
$$f^{'}(t)= f^{'}_{+}(0)+2 \gamma t +\int_{0}^{\infty}\frac{2\lambda t + \lambda t^2}{(\lambda+t)^2} d\mu(\lambda) \geq 0$$
 for each $t\in [0, \infty)$. Now we are ready to state our next result.
\begin{theorem}\label{UV6}
Let $A$ and $B$ be two positive operators. Then $ A\leq_u B$ if and
only if $f(g(A)^r)\leq_u f(g(B)^r)$ for any increasing operator
convex function $f$, any operator monotone function $g$ and any
positive number $r$.
\end{theorem}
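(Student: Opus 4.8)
The plan is to handle the two implications separately. The ``if'' part is immediate: taking $f(t)=t$ (increasing and operator convex, with $f'_+(0)=1\geq 0$), $g(t)=t$ (operator monotone on $[0,\infty)$) and $r=1$ in the hypothesis we have $f(g(A)^r)=A$ and $f(g(B)^r)=B$, so the hypothesis reduces to $A\leq_u B$. The ``only if'' part will rest on Lemma \ref{UV5} and a power lemma for $\leq_u$, as follows.

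Assume $A\leq U^*BU$ with $U\in\mathcal{U}(\mathscr{H})$, and fix an increasing operator convex $f$, an operator monotone $g$ --- which I take to map $[0,\infty)$ into $[0,\infty)$ so that $g(A)^r$ and $g(B)^r$ are meaningful for non-integer $r$ --- and $r>0$. First I would apply the operator monotonicity of $g$ together with Lemma \ref{U1}:
\[
g(A)\leq g(U^*BU)=U^*g(B)U ,
\]
so that $g(A)\leq_u g(B)$ with the same unitary. The crucial step is to promote this to $g(A)^{2r}\leq_u g(B)^{2r}$; I would isolate this as the following auxiliary statement: for positive operators $C,E$ with $C\leq_u E$, one has $C^p\leq_u E^p$ for every $p>0$. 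Granting it, choose $W\in\mathcal{U}(\mathscr{H})$ with $g(A)^{2r}\leq W^*g(B)^{2r}W$ and write both sides as squares:
\[
\bigl(g(A)^r\bigr)^2=g(A)^{2r}\leq W^*g(B)^{2r}W=\bigl(W^*g(B)^rW\bigr)^2 .
\]
Now Lemma \ref{UV5}, applied to the positive operators $g(A)^r$ and $W^*g(B)^rW$ and combined with the remark after Lemma \ref{UV5} that the increasing operator convex functions on $[0,\infty)$ are exactly the operator convex ones with $f'_+(0)\geq 0$, gives $f(g(A)^r)\leq f(W^*g(B)^rW)=W^*f(g(B)^r)W$, the last equality again by Lemma \ref{U1}. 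Hence $f(g(A)^r)\leq_u f(g(B)^r)$, and since $f,g,r$ were arbitrary this proves the ``only if'' part.

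It remains to prove the auxiliary statement. For $0<p\leq 1$ it is just the L\"owner--Heinz inequality applied to $C\leq U^*EU$. For $p>1$ I would first dispose of the case where $C$ and $E$ are invertible: Lemma \ref{UV4} gives $\log C\leq\log E$, hence $p\log C\leq p\log E$, that is $p\log C\leq_u p\log E$, and Kosaki's implication \eqref{E1} then yields $C^p=e^{p\log C}\leq_u e^{p\log E}=E^p$. (Once the case $p=2$ is in hand one can alternatively reach every $p>1$ by iteration, applying Lemma \ref{UV5} to the functions $t\mapsto t^q$ with $q\in[1,2]$; but establishing even $C^2\leq_u E^2$ appears to need the logarithm--exponential route.) For a general pair of positive operators with $C\leq_u E$ one has $C+\varepsilon I\leq_u E+\varepsilon I$ for every $\varepsilon>0$, so the invertible case furnishes unitaries $W_\varepsilon$ with
\[
C^p\leq(C+\varepsilon I)^p\leq W_\varepsilon^*(E+\varepsilon I)^pW_\varepsilon\leq W_\varepsilon^*E^pW_\varepsilon+\delta(\varepsilon)I ,
\]
where $\delta(\varepsilon)\to 0$ as $\varepsilon\to 0$. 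The step I expect to be the principal obstacle is exactly this passage to the limit: a weak-operator-topology limit of unitaries is merely a contraction, so one cannot simply extract from $(W_\varepsilon)_{\varepsilon>0}$ a unitary $W$ with $C^p\leq W^*E^pW$. Carrying out this limit argument --- equivalently, proving that $\leq_u$ is stable under norm approximation of its left-hand member --- looks like the technical heart of the proof; a conceivable alternative is a direct spectral analysis of $\leq_u$, but that must be done with care, since the naive ``eigenvalue-counting'' description of $\leq_u$ already fails for operators with continuous spectrum.
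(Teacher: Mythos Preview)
Your route matches the paper's: apply $g$ by operator monotonicity, pass to $g(A)^{2r}\leq_u g(B)^{2r}$ via the logarithm (Lemma~\ref{UV4}) and Kosaki's implication~\eqref{E1}, then feed the resulting inequality of squares into Lemma~\ref{UV5} to bring in $f$; the converse is handled identically by taking $f(t)=g(t)=t$ and $r=1$. (Your line ``$\log C\leq\log E$'' should of course read $\log C\leq_u\log E$, but the intent is clear from the next clause.)

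The one step you flag as the principal obstacle---passing from invertible to general positive $A,B$ by letting $\varepsilon\to 0$ while retaining a genuine unitary in the limit---is exactly the step the paper also dispatches in a single sentence (``the general result is deduced from the paragraph above and a limit argument by letting $\varepsilon$ tend to~$0$'') without supplying the mechanism you are asking for. So your sketch is faithful to the paper's argument; the concern you raise about weak limits of unitaries being mere contractions is legitimate, but the paper offers no additional idea at that point beyond asserting that the limit works.
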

\begin{proof}
First we assume that $ 0<A\leq U^*BU$ for some operator $U\in
\mathcal{U}(\mathscr{H})$. Then $0<g(A)\leq g(U^*BU)=U^*g(B)U$ for
any operator monotone function $g$. Let $r$ be a positive number. By
Lemma \ref{UV4} we have $\log(g(A))\leq U^*\log(g(B))U$. Hence
$\log(g(A)^{2r})\leq \log (U^*g(B)^{2r}U)$. Thus by Kosaki result
(\ref{E1}) there is an operator $V\in \mathcal{U}(\mathscr{H})$ such
that $e^{\log g(A)^{2r}}\leq V^*e^{\log (U^*g(B)^{2r}U)}V$, that is
$g(A)^{2r}\leq V^*U^*g(B)^{2r}UV=(V^*U^* g(B)UV)^{2r}$. From which
and Lemma \ref{UV5} we conclude that
\begin{eqnarray}\label{3}
f(g(A)^r)\leq f((V^*U^*g(B)UV)^r)=V^*U^*
f(g(B)^r)UV
\end{eqnarray}
for any increasing operator convex
function $f$. This means that $f(g(A)^r)\leq_uf(g(B)^r)$ as desired.

For the general case note that the condition $ 0\leq A\leq U^* BU$
ensures $ 0< A+\varepsilon\leq U^* (B+\varepsilon) U$ for all
$\varepsilon>0$. Now the general result is deduced from the
paragraph above and a limit argument by letting $\varepsilon$ tend
to $0$.

The reverse is clear by taking $f(x)=x$ and $r=1$.
\end{proof}

From Theorem \ref{UV6} one can see that if $ A\leq_u B$, then there
exists a sequence $\{U_n\}_{n\in\mathbb{N}} \subset
\mathcal{U}(\mathscr{H})$ such that $A^n\leq U_n^*B^nU_n$. An
interesting problem is finding an operator $U\in
\mathcal{U}(\mathscr{H})$ such that $A^n\leq U^*B^nU$ for any
positive integer $n$. If there exist a sequence
$\{U_n\}_{n\in\mathbb{N}}\subset \mathcal{U}(\mathscr{H})$ such that
$A^n\leq U_n^*B^nU_n$ and in the strong operator topology $\{U_n\}$
converges to an operator $U\in \mathcal{U}(\mathscr{H})$, then $U$
is the desired unitary operator. To see this let
$\xi\in\mathscr{H},n\in\mathbb{N}$ and $m>n$. Then
\begin{eqnarray}\label{2}
\langle A^n\xi,\xi\rangle\leq\langle
U_m^*B^nU_m\xi,\xi\rangle=\langle
B^nU_m\xi,U_m\xi\rangle.
\end{eqnarray}
Note that in the inequality
of (\ref{2}) we used $\alpha={m\over n}$ in the L\"owner--Heinz inequality. By our assumption we have
$$\langle B^nU_m\xi,U_m\xi\rangle\rightarrow\langle
B^nU\xi,U\xi\rangle=\langle U^*B^nU\xi,\xi\rangle$$ as
$m\rightarrow\infty$, which by (\ref{2}) implies that $A^n\leq U^*
B^nU$ as requested.

The next theorem is related to the problem above. First we need to
introduce our notation. For any two positive operators $A$ and $B$
and any positive integer $n$ let
$\mathcal{K}_{n,A,B}=\{U\in\mathcal{U}(\mathscr{H}): A^n\leq
U^*B^nU\}$. This set is compact in the case when $\mathscr{H}$ is
finite dimensional. Further, $A\leq U^*BU$ for some unitary matrix
$U$ if $\lambda_j(A)\leq\lambda_j(B)\,\,(1\leq j\leq n)$, where
$\lambda_1(\cdot)\geq\ldots\geq\lambda_n(\cdot)$ denotes eigenvalues
arranged in the decreasing order with their multiplicities counted.
Thus $\mathcal{K}_{n,A,B}$ can be nonempty. Our next result reads as
follows.
\begin{theorem}\label{SS}
Suppose that $A$ and $B$ be two positive operators such that
$\mathcal{K}_{n_0,A,B}$ is a nonempty set, which is either compact
in the strong operator topology or closed in the weak operator
topology for some positive integer $n_0$. Then there exists an
operator $U\in \mathcal{U}(\mathscr{H})$ such that $A^n\leq U^*B^nU$
for every positive integer $n$.
\end{theorem}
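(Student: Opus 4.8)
The plan is to combine the monotonicity of $\leq_u$ under the maps $X\mapsto X^{r}$ coming from the L\"owner--Heinz inequality with a compactness argument on a decreasing chain of the sets $\mathcal{K}_{n,A,B}$.

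First I would note that the mere nonemptiness of $\mathcal{K}_{n_0,A,B}$ already forces $A\leq_u B$: choosing $U_0\in\mathcal{K}_{n_0,A,B}$ and raising $A^{n_0}\leq U_0^*B^{n_0}U_0$ to the power $1/n_0\in(0,1]$ via L\"owner--Heinz (and Lemma~\ref{U1}) gives $A\leq U_0^*BU_0$. Hence, by the discussion following Theorem~\ref{UV6} (equivalently, Theorem~\ref{UV6} with $g(t)=t$, $r=n$ and $f(t)=t$), we obtain $A^n\leq_u B^n$, that is, $\mathcal{K}_{n,A,B}\neq\emptyset$, for \emph{every} positive integer $n$. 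Next I would record that these sets are nested: if $A^n\leq U^*B^nU$ and $1\leq m\leq n$, then raising to the power $m/n\in(0,1]$ yields $A^m\leq U^*B^mU$, so $\mathcal{K}_{n,A,B}\subseteq\mathcal{K}_{m,A,B}$. Thus $\{\mathcal{K}_{n,A,B}\}_{n\geq n_0}$ is a decreasing sequence of nonempty subsets of $\mathcal{K}_{n_0,A,B}$, and it suffices to show this sequence has nonempty intersection: any $U$ in it satisfies $A^n\leq U^*B^nU$ for all $n\geq n_0$, hence for all $n\geq1$ by the nesting.

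For the intersection I would first handle case~(i). I claim that for each $n\geq n_0$ the set $\mathcal{K}_{n,A,B}$ is closed in the strong operator topology relative to $\mathcal{K}_{n_0,A,B}$: if a net $U_\alpha\in\mathcal{K}_{n,A,B}$ converges strongly to $U$, then $U\in\mathcal{K}_{n_0,A,B}$ since the latter is closed, so $U$ is unitary; moreover $B^nU_\alpha\xi\to B^nU\xi$ in norm for each $\xi\in\mathscr{H}$, whence $\langle A^n\xi,\xi\rangle\leq\langle B^nU_\alpha\xi,U_\alpha\xi\rangle\to\langle B^nU\xi,U\xi\rangle$, and therefore $U\in\mathcal{K}_{n,A,B}$. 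Consequently each $\mathcal{K}_{n,A,B}$ with $n\geq n_0$ is a closed subset of the compact set $\mathcal{K}_{n_0,A,B}$, hence compact; being a decreasing sequence of nonempty compact sets, their intersection is nonempty, which gives the desired $U$.

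Finally, I would reduce case~(ii) to case~(i). Since every unitary has norm one, $\mathcal{K}_{n_0,A,B}$ is a bounded, weak-operator-closed subset of $\mathbb{B}(\mathscr{H})$, hence weak-operator compact, because the closed unit ball of $\mathbb{B}(\mathscr{H})$ is weak-operator compact and a closed subset of a compact set is compact. On the other hand $\mathcal{K}_{n_0,A,B}\subseteq\mathcal{U}(\mathscr{H})$, and on $\mathcal{U}(\mathscr{H})$ the weak and strong operator topologies agree: if $U_\alpha$ and $U$ are unitary with $U_\alpha\to U$ weakly, then $\|U_\alpha\xi-U\xi\|^2=2\|\xi\|^2-2\,\mathrm{Re}\langle U_\alpha\xi,U\xi\rangle\to0$. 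Hence $\mathcal{K}_{n_0,A,B}$ is strong-operator compact and case~(i) applies. I expect the only genuinely delicate point to be the closedness of $\mathcal{K}_{n,A,B}$ inside $\mathcal{K}_{n_0,A,B}$ — this is where the hypothesis on $\mathcal{K}_{n_0,A,B}$ is essential, since it guarantees that the relevant limit operator is again unitary so that the quadratic form $\xi\mapsto\langle B^nU\xi,U\xi\rangle$ is controlled; the remaining ingredients are routine applications of L\"owner--Heinz and compactness.
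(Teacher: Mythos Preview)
Your proof is correct and follows essentially the same strategy as the paper's: L\"owner--Heinz gives the nesting $\mathcal{K}_{n+1,A,B}\subseteq\mathcal{K}_{n,A,B}$, Theorem~\ref{UV6} gives nonemptiness of each $\mathcal{K}_{n,A,B}$, the strong-limit argument shows each $\mathcal{K}_{n,A,B}$ is closed inside $\mathcal{K}_{n_0,A,B}$, and compactness then yields a common unitary. Your handling of the weak-closure case---reducing it to the strong-compactness case by observing that the weak and strong operator topologies coincide on $\mathcal{U}(\mathscr{H})$---is more explicit than the paper's, which simply says to repeat the argument using weak compactness of the unit ball.
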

\begin{proof}
First assume that $\mathcal{K}_{n_0,A,B}$ is a nonempty strongly
compact set for some positive integer $n_0$. Without loss of
generality we may assume that $n_0=1$. Let us set $\mathcal{K}_n$
instead of $\mathcal{K}_{n,A,B}$ for the sake of simplicity. Using
the L\"owner--Heinz inequality one easily see that for any positive
integer $n$\begin{eqnarray}\label{4} \mathcal{K}_{n+1}\subseteq
\mathcal{K}_{n}\,.
\end{eqnarray}
We show that the sets $\mathcal{K}_{n}$ are strongly closed. To
achieve this aim, fix $n$ and let $\{U_\alpha\}$ be a net in
$\mathcal{K}_n$ such that $U_\alpha \rightarrow U$ in which the
limit is taken in the strong operator topology. Since
$\mathcal{K}_n\subseteq\mathcal{K}_1$ and $\mathcal{K}_1$ is assumed
to be a strongly compact set, we conclude that $U\in\mathcal{K}_1$
which implies that $U\in \mathcal{U}(\mathscr{H})$. Let
$\xi\in\mathscr{H}$. We have
\begin{eqnarray}\label{5}
\langle A^n\xi,\xi\rangle\leq\langle U_\alpha^*
B^nU_\alpha\xi,\xi\rangle=\langle B^nU_\alpha\xi,U_\alpha\xi\rangle
\end{eqnarray}
Since $\{U_\alpha\}$ converges strongly to $U$ we obtain
\begin{eqnarray}\label{52}
\langle B^nU_\alpha\xi,U_\alpha\xi\rangle\rightarrow\langle
B^nU\xi,U\xi\rangle=\langle U^*B^nU\xi,\xi\rangle\,.
\end{eqnarray}
Applying (\ref{5}) and (\ref{52}) we get $A^n\leq U^* B^nU$. Thus
$U\in\mathcal{K}_n$. Hence $\mathcal{K}_n$ is closed. Now Theorem
\ref{UV6} shows that the sets $\mathcal{K}_n$ are nonempty and
(\ref{4}) shows that
$\bigcap_{n\in{F}}\mathcal{K}_n=\mathcal{K}_{\max{F}}\neq\phi$ for
any arbitrary finite subset ${F}$ of $\mathbb{N}$. Hence
$\bigcap_{n\in\mathbb{N}}\mathcal{K}_n\neq\phi$ because the
$\mathcal{K}_{n}$ are closed subsets of $\mathcal{K}_1$ and
$\mathcal{K}_1$ is compact.

Second, assume that $\mathcal{K}_{n_0}$ is a weakly closed nonempty
set for some positive integer $n_0$. Due to the unit ball of
$\mathbb{B}(\mathscr{H})$ is weakly compact, we can repeat the first
argument and reach to the desired consequence.
\end{proof}

Now we aim to prove our last result. We state some lemmas which are
interesting on their own right.

\begin{lemma}\label{U11}
Let $P\in\mathbb{B}(\mathscr{H})$ be an orthogonal projection and
$U\in \mathcal{U}(\mathscr{H})$ such that $P\leq U^\ast PU$. Then
$P=U^\ast PU$.
\end{lemma}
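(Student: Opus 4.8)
The plan is to reformulate the operator inequality as an inclusion of ranges and then to show that the isometry it produces is surjective. Set $E=\mathrm{ran}(P)$. For orthogonal projections $P\leq Q$ holds precisely when $\mathrm{ran}(P)\subseteq\mathrm{ran}(Q)$, and $U^{\ast}PU$ is the orthogonal projection onto $U^{\ast}E=\{\xi:U\xi\in E\}$; hence the hypothesis $P\leq U^{\ast}PU$ is equivalent to $E\subseteq U^{\ast}E$, that is, to $UE\subseteq E$. Consequently $U_{0}:=U|_{E}$ is an isometry of the Hilbert space $E$ into itself, and the desired conclusion $P=U^{\ast}PU$ is exactly the statement that $UE=E$, i.e.\ that $U_{0}$ is onto. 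One may equivalently set the problem up as in the proof of Theorem \ref{UV3}: taking $\lambda>0$ and $T=(P+\lambda)^{1/2}U$ one gets $TT^{\ast}=P+\lambda\leq U^{\ast}(P+\lambda)U=T^{\ast}T$, so $T$ is hyponormal with polar decomposition $T=U|T|$, and $P=U^{\ast}PU$ amounts to the normality of $T$.

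To produce the surjectivity I would iterate the inequality. Conjugating $P\leq U^{\ast}PU$ by $U^{\ast}$ gives $UPU^{\ast}\leq P$, and inductively
\[
\cdots\leq U^{2}PU^{\ast 2}\leq UPU^{\ast}\leq P\leq U^{\ast}PU\leq U^{\ast 2}PU^{2}\leq\cdots.
\]
Thus $\{U^{n}PU^{\ast n}\}_{n}$ is a decreasing sequence of orthogonal projections, each unitarily equivalent to $P$, that converges in the strong operator topology to the orthogonal projection $Q$ onto $\bigcap_{n}U^{n}E$, and one checks that $UQU^{\ast}=Q$; dually $\{U^{\ast n}PU^{n}\}_{n}$ increases strongly to a projection $R\geq P$ with $U^{\ast}RU=R$. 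The idea is then to argue that a decreasing chain of mutually unitarily equivalent projections cannot strictly decrease, so that $UPU^{\ast}=P$ and therefore $P=U^{\ast}PU$. When $\mathscr{H}$ is finite dimensional this is just a comparison of ranks, and if $P$ has finite rank or finite co-rank the same comparison (applied to $P$, respectively to $I-P$) settles it at once.

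The main obstacle is precisely making that last step work for a general projection on an infinite dimensional space, since an isometry of a Hilbert space into itself need not be surjective; so the content of the lemma lies exactly here. I would try to overcome it either by a compactness argument for the relevant set of unitaries, in the spirit of the proof of Theorem \ref{SS}, or by invoking the normality of the hyponormal operator $T$ above together with the finite spectral structure of $|T|$ (its nontrivial spectral projection being $U^{\ast}PU$, while the corresponding spectral projection of $TT^{\ast}$ is $P$). Everything else — the range characterisation of the order on projections, the monotone limits, and the identity $UQU^{\ast}=Q$ — is routine, so I expect this surjectivity/normality step to be the only real difficulty.
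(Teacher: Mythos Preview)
Your reformulation is correct and your instinct is exactly right: the whole content of the lemma is the surjectivity of the isometry $U_0=U|_E$, and everything else is routine. Unfortunately the two routes you propose for forcing that surjectivity cannot succeed, because the lemma as stated is \emph{false} in infinite dimensions. Take $\mathscr{H}=\ell^{2}(\mathbb{Z})$, let $U$ be the bilateral shift $Ue_n=e_{n+1}$, and let $P$ be the orthogonal projection onto $E=\overline{\operatorname{span}}\{e_n:n\ge 0\}$. Then $UE=\overline{\operatorname{span}}\{e_n:n\ge 1\}\subsetneq E$, so $P\le U^{\ast}PU$ (the latter being the projection onto $\overline{\operatorname{span}}\{e_n:n\ge -1\}$) while $P\neq U^{\ast}PU$. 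This is precisely the non-surjective-isometry obstruction you anticipated, realised inside a genuine unitary; note that since $P^n=P$, the same example also violates Theorem~\ref{PP}.

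For comparison, the paper argues via the block decomposition $U=\left(\begin{smallmatrix}U_1&U_2\\U_3&U_4\end{smallmatrix}\right)$ on $E\oplus E^{\perp}$: from $P\le U^{\ast}PU$ and $U^{\ast}U=I$ it obtains $U_1^{\ast}U_1=I_1$, $U_3=0$ and $U_2^{\ast}U_1=0$, and then concludes $U_2^{\ast}=U_2^{\ast}U_1U_1^{\ast}=0$. That last equality tacitly uses $U_1U_1^{\ast}=I_1$, i.e.\ that the isometry $U_1$ is onto --- which is exactly the step you flagged as the real difficulty, and which fails in the example above (there $U_1$ is the unilateral shift on $E$, so $U_1U_1^{\ast}\neq I_1$ and $U_2\neq 0$). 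So your diagnosis of the obstacle is correct; the paper's block-matrix computation does not actually overcome it either. The statement does hold when $P$ or $I-P$ has finite rank, for the rank-comparison reason you gave, but not in general.
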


\begin{proof}
Let $ran(P)=\mathscr{H}_1$ and let $I_1$ and $I_2$ be the identity
operators on $\mathscr{H}_1$ and $\mathscr{H}_1^\bot$, respectively.
Therefore $P=I_1\oplus 0$  and $U=\left(
          \begin{array}{cc}
            U_1 & U_2 \\
           U_3 & U_4 \\
          \end{array}
        \right)\,\,\, \mbox{on\,}\,\mathscr{H}=\mathscr{H}_1\oplus
        \mathscr{H}_1^\bot$. From $P\leq U^\ast PU$ we reach to the
following inequality
\begin{eqnarray}\label{L1}\left(
          \begin{array}{cc}
            I_1 & 0 \\
           0 & 0 \\
          \end{array}
        \right)\leq \left(
          \begin{array}{cc}
            U^\ast_1U_1 & U_1^\ast I_1U_2 \\
           U_2^\ast I_1 U_1 & U_2^\ast I_1U_2
          \end{array}
        \right)\,,\end{eqnarray} which implies that $I_1\leq U_1^\ast
        U_1$. Since $U^\ast U=I$ hence \begin{eqnarray}\label{L2}\left(
          \begin{array}{cc}
            U_1^\ast U_1+U_3^\ast U_3 &U_1^\ast U_3+U_3^\ast U_4  \\
           U_2^\ast U_1+U_4^\ast U_3 & U_2^\ast U_2+U_4^\ast U_4 \\
          \end{array}
        \right)= \left(
          \begin{array}{cc}
            I_1 & 0 \\
           0 & I_2 \\
          \end{array}
        \right).\end{eqnarray}
From \eqref {L1} and \eqref {L2} we see that $I_1=U_1^\ast U_1$,
$U_3=0$ and $U_2^\ast U_1=0$. Thus $U_2^\ast=U_2^\ast U_1U_1^\ast=0$ and this ensures that $U^\ast PU=\left(\begin{array}{cc}
            U^\ast_1U_1 & 0 \\
           0 & 0 \\
          \end{array}
        \right)=P$ as desired.
\end{proof}
In the sequel we need to use the structure of the spectral family
$\{E_\lambda(A)\}$ corresponding to an operator $A \in
\mathbb{B}_h(\mathscr{H})$; cf. \cite{HEL}. Recall that
$E_\lambda(A)$ can be defined as the strong operator limit
$\varphi_\lambda(A)$ of the sequence $\{\varphi_{\lambda, n}(A)\}$,
where $\{\varphi_{\lambda, n}\}$ is a sequence of decreasing
nonnegative continuous functions on the real line  pointwise
converging to the following function defined on the spectrum ${\rm
sp}(A)$ of $A$:
$$
\varphi_\lambda(t)=
\begin{cases}
1 & \text{if } -\infty < t \leq \lambda \\
0 & \text{if } \lambda < t<\infty
\end{cases}
$$
\begin{remark}\label{U12}
It follows from Lemma \ref{U1} that if $A$ is a positive operator
and $U\in \mathcal{U}(\mathscr{H})$, then $E_\lambda(U^\ast
AU)=U^\ast E_\lambda(A)U$ for every $\lambda\in\mathbb{R}$.
\end{remark}

\begin{lemma}\label{U13}\cite[theorem 3]{OLS}
Let $A$ and $B$ be positive operators in $\mathbb{B}(\mathscr{H})$.
Then $A^n\leq B^n$ for $n\in\mathbb{N}$ if and only if
$E_\lambda(A)\leq E_\lambda(B)$ for every $\lambda\in\mathbb{R}$.
\end{lemma}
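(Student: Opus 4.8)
The plan is to establish the equivalence by passing through the ``layer--cake'' integral representation of a power of a positive operator in terms of its spectral family. For $X\ge 0$ and $n\in\mathbb{N}$, starting from $t^n=\int_0^\infty n\lambda^{n-1}\mathbf 1_{(\lambda,\infty)}(t)\,d\lambda$ and applying the functional calculus one has
\begin{equation*}
X^n=\int_0^\infty n\lambda^{n-1}\bigl(I-E_\lambda(X)\bigr)\,d\lambda .
\end{equation*}
I record at the outset that the projection comparison produced by the argument is the \emph{spectral} (Olson) order $E_\lambda(A)\ge E_\lambda(B)$, which is oriented oppositely to the comparison of the powers; this is the orientation to be paired with $A^n\le B^n$ throughout.

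For the implication from projections to powers I would assume $E_\lambda(A)\ge E_\lambda(B)$ for every $\lambda$, so that $I-E_\lambda(A)\le I-E_\lambda(B)$. Integrating this pointwise (in $\lambda$) inequality against the positive weight $n\lambda^{n-1}\,d\lambda$ in the representation above gives $A^n\le B^n$, and this holds for each $n$ separately; this direction is routine once the representation is available.

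The substance is the converse. Assume $A^n\le B^n$ for all $n$ and fix $\lambda>0$. For each $n$ consider the bounded continuous function $f_n(t)=t^n(\lambda^n+t^n)^{-1}$; by the functional calculus $f_n(A)=I-\lambda^n(\lambda^n I+A^n)^{-1}$, and similarly for $B$. From $A^n\le B^n$ we get $\lambda^n I+A^n\le \lambda^n I+B^n$, and since inversion is operator antitone on positive invertible operators, $(\lambda^n I+A^n)^{-1}\ge(\lambda^n I+B^n)^{-1}$; multiplying by $\lambda^n>0$ and subtracting from $I$ yields $f_n(A)\le f_n(B)$. Now $0\le f_n\le 1$ and $f_n(t)\to \mathbf 1_{(\lambda,\infty)}(t)+\tfrac12\mathbf 1_{\{\lambda\}}(t)$ pointwise as $n\to\infty$, so by dominated convergence applied to the scalar spectral measures $d\langle E_\lambda(A)x,x\rangle$ and $d\langle E_\lambda(B)x,x\rangle$ the forms $\langle f_n(A)x,x\rangle$ and $\langle f_n(B)x,x\rangle$ converge for every $x\in\mathscr{H}$. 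Passing to the limit preserves the inequality and, after rearranging, gives $G_\lambda(A)\ge G_\lambda(B)$, where $G_\lambda(X)=E_\lambda(X)-\tfrac12\mathbf 1_{\{\lambda\}}(X)=\tfrac12\bigl(\mathbf 1_{(-\infty,\lambda]}(X)+\mathbf 1_{(-\infty,\lambda)}(X)\bigr)$.

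It remains to discard the atomic correction at $\{\lambda\}$. Since $\bigcap_{\mu>\lambda}(-\infty,\mu]=\bigcap_{\mu>\lambda}(-\infty,\mu)=(-\infty,\lambda]$, the spectral family is right continuous and $G_\mu(X)\to E_\lambda(X)$ strongly as $\mu\downarrow\lambda$; moreover $G_\mu(B)\ge E_\lambda(B)$ for every $\mu>\lambda$. Hence from $G_\mu(A)\ge G_\mu(B)\ge E_\lambda(B)$ and $G_\mu(A)\to E_\lambda(A)$ we obtain $E_\lambda(A)\ge E_\lambda(B)$; the case $\lambda<0$ is trivial as both projections vanish. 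I expect the main obstacle to be precisely this final step together with the limit $n\to\infty$: the algebraic core (antitonicity of inversion applied to $\lambda^n I+A^n\le\lambda^n I+B^n$) is short, but justifying that quadratic--form limits preserve the order and then cleaning up the atom $\{\lambda\}$ via right continuity is where the spectral--theoretic care is needed.
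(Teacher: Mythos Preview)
The paper does not supply a proof of this lemma at all; it is quoted verbatim as \cite[Theorem~3]{OLS} and used as a black box in the proof of Theorem~\ref{PP}. So there is no ``paper's own proof'' to compare against.

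Your argument is correct and is one of the standard routes to Olson's theorem: the layer--cake identity $X^n=\int_0^\infty n\lambda^{n-1}(I-E_\lambda(X))\,d\lambda$ gives the easy direction, while for the converse the operator--antitone map $t\mapsto -t^{-1}$ applied to $\lambda^n I+A^n\le \lambda^n I+B^n$ yields $f_n(A)\le f_n(B)$ for $f_n(t)=t^n/(\lambda^n+t^n)$, and the strong limit via dominated convergence on the scalar spectral measures, followed by the right--continuity cleanup of the atom at $\{\lambda\}$, is handled carefully. Two small remarks: (i) the case $\lambda=0$ is not covered by ``$\lambda<0$ is trivial'' but follows immediately from right continuity of $\mu\mapsto E_\mu$ once you have the inequality for all $\mu>0$; (ii) you are right that the orientation in the lemma as printed is backwards: with the convention $E_\lambda=\mathbf 1_{(-\infty,\lambda]}$ used in the paper, $A^n\le B^n$ for all $n$ is equivalent to $E_\lambda(A)\ge E_\lambda(B)$, not $\le$. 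This does not affect the downstream use in Theorem~\ref{PP}, since Lemma~\ref{U11} applies equally with $U$ replaced by $U^*$.
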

\begin{theorem}\label{PP}
Let $A$ be a positive operator and $U\in \mathcal{U}(\mathscr{H})$
such that $A^n\leq U^\ast A^nU$ for all $n\in\mathbb{N}$. Then
$A=U^\ast AU$.
\end{theorem}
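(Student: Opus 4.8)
The plan is to reduce the hypothesis about the powers $A^n$ to an inequality between the spectral projections of $A$, and then to invoke Lemma~\ref{U11}. First I would note that, since $A$ is positive and $x\mapsto x^n$ is continuous on ${\rm sp}(A)$, Lemma~\ref{U1} gives $(U^\ast A U)^n=U^\ast A^n U$ for every $n\in\mathbb{N}$. Consequently the assumption $A^n\leq U^\ast A^n U$ is exactly $A^n\leq (U^\ast A U)^n$ for all $n$, with both $A$ and $U^\ast A U$ positive.

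Next I would apply Lemma~\ref{U13} with $B=U^\ast A U$: the family of inequalities $A^n\leq (U^\ast A U)^n$, $n\in\mathbb{N}$, is equivalent to $E_\lambda(A)\leq E_\lambda(U^\ast A U)$ for every $\lambda\in\mathbb{R}$. By Remark~\ref{U12} we have $E_\lambda(U^\ast A U)=U^\ast E_\lambda(A)U$, so this becomes $E_\lambda(A)\leq U^\ast E_\lambda(A)U$ for every $\lambda$. Since each $E_\lambda(A)$ is an orthogonal projection, Lemma~\ref{U11} upgrades this to the equality $E_\lambda(A)=U^\ast E_\lambda(A)U=E_\lambda(U^\ast A U)$ for all $\lambda\in\mathbb{R}$.

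Finally, a self-adjoint operator is recovered from its spectral family via $T=\int \lambda\, dE_\lambda(T)$, so the coincidence of the spectral families of $A$ and $U^\ast A U$ forces $A=U^\ast A U$, which is the desired conclusion.

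I do not expect any genuinely hard step here: essentially all of the analytic content is already contained in Lemmas~\ref{U11}, \ref{U13} and Remark~\ref{U12}. The only points that require care are the bookkeeping that turns ``$A^n\leq U^\ast A^n U$'' into ``$A^n\leq(U^\ast A U)^n$'', and checking that the realization of the spectral family as the strong limit $\varphi_\lambda(A)$ described before Remark~\ref{U12} is the same one used in Lemma~\ref{U13}, so that the identity $E_\lambda(U^\ast A U)=U^\ast E_\lambda(A)U$ may be applied without ambiguity.
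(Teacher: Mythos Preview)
Your proposal is correct and follows essentially the same route as the paper: use Lemma~\ref{U13} and Remark~\ref{U12} to obtain $E_\lambda(A)\leq U^\ast E_\lambda(A)U$, then Lemma~\ref{U11} to upgrade to equality, and finally recover $A=U^\ast AU$. The only cosmetic difference is in the last step: the paper rewrites $E_\lambda(A)=U^\ast E_\lambda(A)U$ as the commutation $UE_\lambda(A)=E_\lambda(A)U$ and passes to $UA=AU$, whereas you invoke the spectral integral directly; both are equivalent.
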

\begin{proof}
From Lemma \ref{U13} and Remark \ref{U12} we see $$E_\lambda(A)\leq
E_\lambda(U^\ast AU)=U^\ast E_\lambda(A) U.$$ Thus from Lemma
\ref{U11} we have $E_\lambda(A)=U^\ast E_\lambda(A)U$, which implies
that $UE_\lambda(A)=E_\lambda(A)U$ for every $\lambda\in\mathbb{R}$.
Hence $UA=AU$, or equivalently $A=U^\ast AU$
\end{proof}
\begin{corollary}
Suppose that $A$ and $B$ be two positive operators such that
$\mathcal{K}_{n_0,A,B}$ and $\mathcal{K}_{m_0,B,A}$ are either
strongly compact or weakly closed nonempty sets for some positive
integers $n_0$ and $m_0$, respectively. Then $A$ and $B$ are
unitarily equivalent.
\end{corollary}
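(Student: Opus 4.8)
The plan is to reduce everything to Theorems \ref{SS} and \ref{PP}. First I would apply Theorem \ref{SS} to the pair $(A,B)$: since $\mathcal{K}_{n_0,A,B}$ is nonempty and either strongly compact or weakly closed, there exists $U\in\mathcal{U}(\mathscr{H})$ with $A^n\leq U^*B^nU$ for every $n\in\mathbb{N}$. Symmetrically, applying Theorem \ref{SS} to $(B,A)$, the hypothesis on $\mathcal{K}_{m_0,B,A}$ yields $V\in\mathcal{U}(\mathscr{H})$ with $B^n\leq V^*A^nV$ for every $n\in\mathbb{N}$.

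Next I would conjugate the second family by $U$ and splice it onto the first: for each $n\in\mathbb{N}$,
\[
A^n\leq U^*B^nU\leq U^*V^*A^nVU=(VU)^*A^n(VU).
\]
Setting $W=VU\in\mathcal{U}(\mathscr{H})$, this reads $A^n\leq W^*A^nW$ for all $n\in\mathbb{N}$, so Theorem \ref{PP} (applicable since $A$ is positive and $W$ is unitary) gives $A=W^*AW$; equivalently $W$ commutes with $A$, hence $A^n=W^*A^nW$ for every $n$. Feeding this back into the displayed chain makes its two outer terms equal, so the inequalities collapse to equalities: $A^n=U^*B^nU$ for all $n\in\mathbb{N}$. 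Taking $n=1$ gives $A=U^*BU$, i.e. $B=UAU^*$, so $A$ and $B$ are unitarily equivalent.

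I do not expect a genuine obstacle here: once the two applications of Theorem \ref{SS} are in hand, the rest is the routine observation that a unitary $W$ with $A\leq W^*A^nW$ for all $n$ must, by Theorem \ref{PP}, actually commute with $A$, which turns the squeeze $A^n\leq U^*B^nU\leq A^n$ into an identity. The only points to be careful about are purely bookkeeping ones: that $W=VU$ is again unitary, that Theorem \ref{PP} needs positivity of $A$ (which is assumed), and that the hypotheses on $\mathcal{K}_{n_0,A,B}$ and $\mathcal{K}_{m_0,B,A}$ are exactly those required to invoke Theorem \ref{SS}.
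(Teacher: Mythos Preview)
Your proposal is correct and follows the same route as the paper: apply Theorem \ref{SS} twice to obtain $U$ and $V$, chain the inequalities to get $A^n\leq (VU)^*A^n(VU)$ for all $n$, and invoke Theorem \ref{PP}. You have merely spelled out the final collapse to $A=U^*BU$ that the paper leaves implicit; the one typo in your closing paragraph (``$A\leq W^*A^nW$'' should read ``$A^n\leq W^*A^nW$'') is harmless since you already stated it correctly above.
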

\begin{proof}
By Theorem \ref{SS} there exist operators $U, V \in
\mathcal{U}(\mathscr{H})$ such that $A^n\leq U^\ast B^nU$ and
$B^n\leq V^\ast A^nV$ for all $n\in\mathbb{N}$. Thus $A^n\leq U^\ast
B^nU\leq U^\ast V^\ast A^nVU$. Now the result is obtained from
Theorem \ref{PP}.
\end{proof}
\bigskip

\end{document}